\newtheorem{theorem}{Theorem}[section]
\newtheorem{lemma}[theorem]{Lemma}
\theoremstyle{definition}
\theoremstyle{remark}
\numberwithin{equation}{section}
\DeclareMathOperator{\conv}{conv}
\newcommand{\Var}{\operatorname{Var}}
\def\ba{\begin{array}}
\def\ea{\end{array}}
\def\bea{\begin{eqnarray} \label}
\def\eea{\end{eqnarray}}
\def\be{\begin{equation} \label}
\def\ee{\end{equation}}
\def\bit{\begin{itemize}}
\def\eit{\end{itemize}}
\def\ben{\begin{enumerate}}
\def\een{\end{enumerate}}
\def\EE{\mathbb{E}}
\def\NN{\mathbb{N}}
\def\PP{\mathbb{P}}
\def\RR{\mathbb{R}}
\def\SS{\mathbb{S}}
\def\b{\beta}
\def\g{\gamma}
\def\G{\Gamma}
\def\dint{\textup{d}}
\def\Vis{\textup{Vis}}
\begin{document}

\title{\textbf{Random polytopes: central limit theorems\\for intrinsic volumes}}

\author{Christoph Th\"ale\footnotemark[1],\; Nicola Turchi\footnotemark[2]\;\footnotemark[3]\; and Florian Wespi\footnotemark[4]}

\date{}
\renewcommand{\thefootnote}{\fnsymbol{footnote}}

\footnotetext[1]{
Faculty of Mathematics, Ruhr University Bochum, Germany. Email: \texttt{christoph.thaele@rub.de}}

\footnotetext[2]{
Faculty of Mathematics, Ruhr University Bochum, Germany. Email: \texttt{nicola.turchi@rub.de}}

\footnotetext[4]{Institute of Mathematical Statistics and Actuarial Science, University of Bern, Switzerland.\\Email: \texttt{florian.wespi@stat.unibe.ch}}

\footnotetext[3]{Research supported by the Research Training Group RTG2131 \textit{High-dimensional Phenomena in Probability -- Fluctuations and Discontinuity}.}

\maketitle

\begin{abstract}
\noindent 	Short and transparent proofs of central limit theorems for intrinsic volumes of random polytopes in smooth convex bodies are presented. They combine different tools such as estimates for floating bodies with Stein's method from probability theory.
\bigskip
\\
\textbf{Keywords}. {Central limit theorem, intrinsic volume, random polytope, stochastic geometry}\\
\textbf{MSC (2010)}. 52A22, 60D05, 60F05
\end{abstract}
\maketitle
\section{Introduction and main result}
Fix a space dimension $n\geq 2$, let $N\geq n+1$ and suppose that $X_1,\ldots,X_N$ are independent random points that are uniformly distributed in a prescribed convex body $K$, which we assume to have a boundary which is twice differentiable and has positive Gaussian curvature everywhere. The convex hull of $X_1,\ldots,X_N$ is denoted by $K_N$. In this note we are interested in the intrinsic volumes $V_j(K_N)$ of $K_N$, $j\in\{1,\ldots,n\}$. These functionals are of particular importance in convex and integral geometry since they (together with the Euler-characteristic) form a basis of the vector space of all motion invariant and continuous valuations on convex bodies according to Hadwiger's celebrated theorem, cf.\ \cite{Schneider}. The purpose of this text is to prove central limit theorems for $V_j(K_N)$, $j\in\{1,\ldots,n\}$, as $N\to\infty$. Let us point out that such a result is not totally new. Central limit theorems for general $j\in\{1,\ldots,n\}$ were known for a long time only for the Poisson setting in the the special case that $K$ is the $n$-dimensional Euclidean unit ball, see the paper of Calka, Schreiber and Yukich \cite{CalkaSchreiberYukich}. We also refer to the paper of Schreiber \cite{Schreiber} for the case $j=1$. Only very recently (in parallel and independently of us) Lachi\`eze-Rey, Schulte and Yukich \cite{LachiezeReySchulteYukich} gave a proof for the general case by embedding the problem into the theory of so-called stabilizing functionals.

Using estimates for floating bodies, in combination with a general normal approximation bound obtained by Chatterjee \cite{Chatterjee} and Lachi\`eze-Rey and Peccati \cite{LachiezeReyPeccati} originating in Stein's method, our contribution is a quick, transparent and direct proof of the central limit theorems for the intrinsic volumes $V_j(K_N)$, $j\in\{1,\ldots,n\}$, as $N\to\infty$. More precisely, while the traditional methods (see \cite{Pardon,Reitzner05,VuCLT}) first use a conditioning argument to compare \(K_N\) with the floating body and to prove the central limit theorem for a Poissonized version of the random polytopes, before pushing this result to the original model by de-Poissonization, we give a direct proof without making the detour just described. We also avoid this way the more technical theory of stabilizing functionals developed in \cite{LachiezeReySchulteYukich}.

To present our result formally, we shall use the notation $a_N\ll b_N$ whenever for two sequences $(a_N)$ and $(b_N)$, $a_N\leq c\,b_N$ for sufficiently large $N\geq n+1$ and some constant $c\in(0,\infty)$ not depending on $N$ (but possibly on the space dimension $n$ and the convex body $K$).

%In particular, to achieve the lower bound they extract an event that has positive probability and on which the variance of $V_j(K_N)$ is bounded from below by a quantity that is asymptotically of the desired order. The upper bound follows by using -Stein jackknife inequality and more careful estimates for floating bodies.

We define the Wasserstein distance between two random variables \(X\) and \(Y\) as
\begin{equation}\label{eq:Wasserstein}
\mathrm{d}(X,Y) :=\sup_{h\in\mathrm{Lip}_1}\bigl|\EE h(X)-\EE h(Y)\bigr|\,,
\end{equation} 
where the supremum is running over all Lipschitz functions $h:\RR\to\RR$ with Lip\-schitz constant less or equal than \(1\). It is well-known that if $G$ is a standard Gaussian random variable and if $(W_N)_{N\in\NN}$ is a sequence of centred random variables with finite second moment such that $\mathrm{d}(W_N/\sqrt{\Var W_N},G)$ $\to 0$, as $N\to\infty$, then $W_N$ converges in distribution to $G$.

We take into consideration the intrinsic volumes of \(K_N\)
\begin{equation*}
V_j(K_N)\,,\quad j\in\{1,\ldots,n\}\,,\quad N\geq n+1\,,
\end{equation*}
for which we prove the following central limit theorems. As discussed earlier, this extends the results in \cite{Schreiber,VuCLT} to more general convex bodies and to arbitrary intrinsic volumes. 

\begin{theorem}\label{thm:CLT}
	Let $K\subset\RR^n$ be a convex body with twice differentiable boundary and strictly positive Gaussian curvature everywhere. Then, for all $j\in\{1,\ldots,n\}$, one has that $(V_j(K_N)-\EE V_j(K_N))/\sqrt{\Var V_j(K_N)}$ converges in distribution to a standard Gaussian random variable \(G\), as $N\to\infty$. More precisely,
	\begin{equation*}
	\mathrm{d}\biggl(\frac{V_j(K_N)-\EE V_j(K_N)}{\sqrt{\Var V_j(K_N)}},G\biggr)\ll N^{-\frac{1}{2}+\frac{1}{n+1}}\,(\log N)^{3+\frac{2}{n+1}}\,.
	\end{equation*}
\end{theorem}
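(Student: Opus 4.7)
The plan is to apply the Wasserstein-distance normal approximation bound of \cite{Chatterjee, LachiezeReyPeccati} directly to $F = V_j(K_N)$, viewed as a symmetric function of the i.i.d.\ sample $X_1, \ldots, X_N$. That bound controls $d((F-\EE F)/\sigma, G)$ in terms of moments of the first-order difference operator $\D_i F := F - F^{(i)}$ (where $F^{(i)}$ is obtained by replacing $X_i$ by an independent copy $X_i'$) and of the second-order difference $\D_j \D_i F$. Thus the problem reduces to estimating these moments, together with a variance lower bound.

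The geometric core of the argument is a cap estimate via the floating body $K_{[t]}$. For smooth $K$ with strictly positive Gaussian curvature, the economic cap covering gives $V_n(K \setminus K_{[t]}) \asymp t^{2/(n+1)}$. Moreover, taking $t = c(\log N)/N$ with $c$ sufficiently large and running a union bound over a discrete net of halfspaces through $x \in K_{[t]}$, one sees that $x$ is surrounded by sample points on every side with probability $1 - O(N^{-\alpha})$ for any fixed $\alpha > 0$; hence $x$ is not a vertex of $K_N$. Consequently $\D_i F = 0$ except on an event of probability $\lesssim (\log N / N)^{2/(n+1)}$, namely $\{X_i \in K \setminus K_{[c \log N/N]}\}$, and on that event $|\D_i F|$ is bounded by twice the $j$-th intrinsic volume of the cap containing $X_i$, which is of order $(\log N / N)^{j/(n+1)}$ for $1 \leq j \leq n-1$ (and of order $\log N / N$ for $j = n$).

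With these estimates and the variance lower bound $\sigma^2 \gtrsim N^{-(n+3)/(n+1)}$ known from \cite{Reitzner05}, the first-order moment sums $\sum_i \EE|\D_i F|^p$ for $p = 2,3,4$ are straightforward. For the second-order difference $\D_j \D_i F$ I would argue that it vanishes unless $X_i$ and $X_j$ are geometrically linked through a common cap, so that the number of nontrivial pairs is controlled by the expected number of vertices $\sim N^{(n-1)/(n+1)}$, up to logarithmic corrections. Plugging these into the Chatterjee / Lachi\`eze-Rey-Peccati bound and dividing by the appropriate power of $\sigma$ will produce the stated rate.

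The main obstacle will be the fine analysis of the mixed differences $\D_j \D_i F$: one needs that perturbing $X_j$ does not affect $\D_i F$ unless $X_j$ is close (in the cap sense) to $X_i$, which calls for iterating the floating body argument in both coordinates simultaneously. A secondary difficulty is tracking the exact logarithmic factors, since essentially every step of the union-bound localization introduces a factor of $\log N$, and the final exponent $3 + 2/(n+1)$ will emerge only after careful bookkeeping.
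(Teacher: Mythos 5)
Your framework is the same as the paper's: apply the Chatterjee/Lachi\`eze-Rey--Peccati Wasserstein bound to $W=V_j(K_N)-\EE V_j(K_N)$, and control the difference operators by using the floating body $K_{(c\log N/N)}$ to localize the effect of a single point to a cap. Your estimates for the probability that a point lies outside the floating body and for the visible-region analysis of $D_{1,2}f$ are also the ones the paper uses. But there is a genuine gap in your first-order difference bound, and it is precisely the part of the argument that is new for $j<n$.

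You bound $|D_iF|$ on the good event by the $j$th intrinsic volume of the cap, which for a cap of width $(\log N/N)^{1/(n+1)}$ and height $(\log N/N)^{2/(n+1)}$ gives $(\log N/N)^{j/(n+1)}$ when $1\le j\le n-1$. This is much too crude. Plugging it into the $\gamma_3$ term of the bound with $\Var V_j(K_N)\gg N^{-(n+3)/(n+1)}$ yields, say for $j=1$, a contribution of order $N^{1/2+n/(n+1)}$ up to logarithms, which diverges. What is actually needed is $|D_iF|\ll\log N/N$ uniformly in $j$, and this does \emph{not} follow from a bound of the form $V_j(K_N)-V_j(K_{N-1})\le V_j(\mathrm{cap})$. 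The paper obtains the sharper estimate by writing $D_1V_j$ via Kubota's formula \eqref{eq.Kubota} as an integral over $L\in G(n,j)$ of $\mathrm{vol}_j\bigl((K_N|L)\setminus(K_{N-1}|L)\bigr)$, and observing two cancellations that the naive bound misses: (i) the projected set difference is nonzero only when the angle $\sphericalangle(z,L)$ between $L$ and the outer normal $z$ at the cap is of the same order as the cap's central angle $\alpha\ll(\log N/N)^{1/(n+1)}$, and the Grassmannian measure of such $L$ is $\ll(\log N/N)^{(n-j)/(n+1)}$ by Lemma~\ref{lem:SubspacesAngle}; and (ii) for such $L$ the projected cap has one short direction of length $(\log N/N)^{2/(n+1)}$ and $j-1$ of length $(\log N/N)^{1/(n+1)}$, giving $\mathrm{vol}_j(C|L)\ll(\log N/N)^{(j+1)/(n+1)}$. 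The product is $\log N/N$, independent of $j$. Without this Kubota-plus-angle-restriction step, the argument only recovers the known case $j=n$; the variance lower bound for general $j$ should moreover be cited from B\'ar\'any--Fodor--V\'igh~\cite{BaranyFodorVigh} rather than Reitzner's volume result.
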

The rest of this note is structured as follows. In Section \ref{sec:Background} we collect some background material in order to keep our presentation reasonably self-contained. The proof of the central limit theorems is the content of the final Section \ref{sec:ProofCLT}.
\section{Background material}\label{sec:Background}
\subsection*{Convex bodies.}
By a convex body we understand a compact convex subset of $\RR^n$ that has non-empty interior. 
%In our paper, $\langle\,\cdot\,,\,\cdot\,\rangle$ stands for the standard scalar product in $\RR^n$.
We use the symbol $B^n$ to denote the centred Euclidean unit ball in $\RR^n$. For points $X_1,\ldots,X_N\in\RR^n$, we write $[X_1,\ldots,X_N]$ to indicate their convex hull.
\subsection*{Intrinsic volumes.}
Let $K\subset\RR^n$ be a convex body and fix $j\in\{0,\ldots,n\}$. We denote by $G(n,j)$ the Grassmannian of $j$-dimensional linear subspaces of $\RR^n$, which is supplied with the unique Haar probability measure $\nu_j$, see \cite{Schneider}. For $L\in G(n,j)$, we write $\mathrm{vol}_j(K|L)$ for the $j$-dimensional Lebesgue measure of the orthogonal projection of $K$ onto $L$. Finally, we let for integers $\ell\in\NN$, $\kappa_\ell=\pi^{\ell/2}\Gamma(1+\frac{\ell}{2})^{-1}$ be the volume of the $\ell$-dimensional unit ball. Then the $j$th intrinsic volume of $K$ can be defined as
\begin{equation}\label{eq.Kubota}
V_j(K) := \binom{n}{ j}\frac{\kappa_n}{\kappa_j\kappa_{n-j}}\int_{G(n,j)}\mathrm{vol}_j(K|L)\,\nu_j(\dint L)\,.
\end{equation}
For example, $V_n(K)$ is the ordinary volume (Lebesgue measure), $V_{n-1}(K)$ is half of the surface area, $V_1(K)$ is a constant multiple of the mean width and $V_0(K)\equiv 1$ is the Euler-characteristic of $K$.
\subsection*{An estimate for subspaces.}
Fix $j\in\{1,\ldots,n-1\}$, $L\in G(n,j)$ and $z\in\SS^{n-1}$. The angle $\sphericalangle(z,L)$ between $z$ and $L$ is defined as the minimum angle $\min\{\sphericalangle(z,x):x\in L\}$. We now recall the following fact from \cite[Lemma 1]{BaranyFodorVigh}, see also \cite[Lemma 10]{BaranyThaele}.
\begin{lemma}\label{lem:SubspacesAngle}
	We have that
	\begin{align*}
	\nu_j(\{L\in G(n,j):\sphericalangle(z,L)\leq a\}) \ll a^{n-j}
	\end{align*}
	for all sufficiently small $a>0$.
\end{lemma}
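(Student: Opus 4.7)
The plan is to reduce the angular condition to a tail estimate for the projection of $z$ onto a random subspace of complementary dimension, and then to evaluate that tail by a direct calculation on the unit sphere.

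First, since $\|z\|=1$, one has $\cos\sphericalangle(z,L)=\|P_L z\|$, where $P_L$ denotes orthogonal projection onto $L$; consequently
\[
\sphericalangle(z,L)\leq a \quad\Longleftrightarrow\quad \|P_{L^\perp} z\|^2 = 1-\|P_L z\|^2 \leq \sin^2 a.
\]
Because $L\mapsto L^\perp$ is a measure-preserving involution between $(G(n,j),\nu_j)$ and $(G(n,n-j),\nu_{n-j})$, the desired bound is equivalent to
\[
\nu_{n-j}\bigl(\{M\in G(n,n-j):\,\|P_M z\|\leq\sin a\}\bigr)\ll a^{n-j}.
\]

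To evaluate this I would exploit the invariance of Haar measure on the Grassmannian under the orthogonal group: the quantity above is unchanged if one fixes $M=M_0:=\operatorname{span}(e_1,\ldots,e_{n-j})$ and lets $z$ be instead a uniformly distributed point $U\in\SS^{n-1}$. The event then reads $U_1^2+\cdots+U_{n-j}^2\leq \sin^2 a$, which geometrically is a tube of half-width $\sin a$ around the equatorial subsphere $\SS^{n-1}\cap M_0^\perp$ of codimension $n-j$ in $\SS^{n-1}$. A routine spherical disintegration (equivalently, the observation that $\|P_{M_0}U\|^2$ follows a $\mathrm{Beta}((n-j)/2,j/2)$ law whose density near $0$ behaves like $t^{(n-j)/2-1}$) bounds its normalized surface measure by a constant multiple of $(\sin a)^{n-j}$, giving the claimed $\ll a^{n-j}$.

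I do not expect any step to be a genuine obstacle: the substance of the lemma is simply that the exponent $n-j$ matches the codimension of the equatorial subsphere, and the only item requiring a little care is the spherical change of variables (or the equivalent Beta-distribution tail) which produces this exponent.
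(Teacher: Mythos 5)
Your proof is correct. Note that the paper does not prove this lemma at all; it simply cites it from B\'ar\'any--Fodor--V\'igh (Lemma~1) and B\'ar\'any--Th\"ale (Lemma~10), so there is no in-paper argument to compare against. Your chain of reductions is sound: $\cos\sphericalangle(z,L)=\|P_Lz\|$ gives $\sphericalangle(z,L)\le a\iff\|P_{L^\perp}z\|\le\sin a$; orthogonal complementation is a $\nu_j\leftrightarrow\nu_{n-j}$ measure-preserving bijection; $O(n)$-invariance lets you freeze the subspace and randomize $z$ on $\SS^{n-1}$; and the fact that $\|P_{M_0}U\|^2\sim\mathrm{Beta}((n-j)/2,\,j/2)$ (e.g.\ via $U=G/\|G\|$ with $G$ Gaussian, giving a $\chi^2_{n-j}/(\chi^2_{n-j}+\chi^2_j)$ ratio) yields $\PP(\|P_{M_0}U\|^2\le\sin^2a)\ll(\sin a)^{n-j}\le a^{n-j}$, where one uses $j\ge 1$ so the factor $(1-t)^{j/2-1}$ stays integrable near $t=1$ and bounded near $t=0$. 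This is essentially the same tube-around-a-great-subsphere computation used in the cited sources; the Beta-distribution formulation is a clean way to package it.
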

\subsection*{Floating bodies.}
We recall the concept of the floating body, that was introduced independently in \cite{BaranyLarman} and \cite{SchuettWerner}. Let $K\subset\RR^n$ be a convex body and $t>0$ (we shall implicitly assume that $t$ is sufficiently small). If $H$ is a half space of $\RR^n$ with $V_n(K\cap H)=t$, the set $K\cap H$ is called a $t$-cap of $K$. The union of all these $t$-caps is the so-called wet part of $K$ and its complement is the $t$-floating body of $K$. In what follows, we shall denote the $t$-floating body by $K_{(t)}$.
We  rephrase a result of B\'ar\'any and Dalla \cite{BaranyDalla}, which has also been proved by Vu \cite{VuConcentration} using different techniques, see also Lemma 2.2 in \cite{ReitznerSurvey}. Recall that $K_N$ is the random polytope generated by $N$ independent random points that are uniformly distributed in a convex body $K\subset\RR^n$ having sufficiently smooth boundary (in fact, smoothness of the boundary is not needed in the next lemma).
\begin{lemma}\label{prop:VanVu}
	For any $\b\in(0,\infty)$, there exists a constant $c=c(\b,n)\in(0,\infty)$ only depending on $\b$ and on $n$ such that the probability of the event that $K_N$ does not contain the \(c\,\frac{\log N}{N}\text{-floating}\) body is at most $N^{-\b}$, whenever $N$ is sufficiently large.
\end{lemma}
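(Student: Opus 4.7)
The plan is to run the classical empty-cap argument. First I would prove the deterministic statement that, whenever $K_{(t)} \not\subseteq K_N$, there exists a $t$-cap $K \cap H$, that is, a half-space $H$ with $V_n(K \cap H) = t$, that contains none of the sample points $X_1, \ldots, X_N$. To see this, pick $x \in K_{(t)} \setminus K_N$ and use the separation theorem to find a closed half-space $H$ with $x \in H$ and $X_1, \ldots, X_N \notin H$. By the very definition of the floating body we have $V_n(K \cap H) \geq t$; translating the bounding hyperplane inward until the cap has volume exactly $t$ produces a half-space $H' \subseteq H$ with $V_n(K \cap H') = t$, and $H'$ is still disjoint from the sample.

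The next step is to discretise the continuum of $t$-caps. I would invoke an economic cap covering: for all sufficiently small $t$ there exist caps $M_1, \ldots, M_m$ of $K$ with $V_n(M_i) \geq c_1 t$ and $m \leq c_2\, t^{-(n-1)/(n+1)}$, where $c_1, c_2$ depend only on $n$ and $K$, with the property that every $t$-cap of $K$ contains at least one $M_i$. This is a standard consequence of the B\'ar\'any--Larman / Macbeath construction as used in \cite{BaranyDalla} and in Lemma~2.2 of \cite{ReitznerSurvey}, and it requires no smoothness of $\partial K$. Combining the two steps and using independence,
\begin{equation*}
\PP\bigl(K_{(t)} \not\subseteq K_N\bigr) \;\leq\; \sum_{i=1}^{m} \PP\bigl(\{X_1,\ldots,X_N\} \cap M_i = \emptyset\bigr) \;\leq\; m\,\bigl(1 - c_1 t / V_n(K)\bigr)^N \;\leq\; m\,\exp\bigl(-c_1 t N / V_n(K)\bigr).
\end{equation*}

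Finally, setting $t = c(\log N)/N$ turns the exponential factor into $N^{-c_1 c / V_n(K)}$, while $m$ is polynomial in $N$ (of order $N^{(n-1)/(n+1)}$). Choosing the constant $c$ large enough as a function of $\beta$ and $n$ (absorbing also the fixed factor $V_n(K)$) kills the polynomial prefactor and gives the required bound $\PP(K_{(t)} \not\subseteq K_N) \leq N^{-\beta}$ for all sufficiently large $N$. The only genuinely non-routine ingredient is the economic cap covering with the sharp exponent $(n-1)/(n+1)$; once that is quoted as a black box, the remainder is just a separation argument, a union bound, and an elementary exponential tail estimate for a Binomial$(N, c_1 t / V_n(K))$ random variable.
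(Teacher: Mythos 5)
Your proof is correct, but note that the paper itself does not prove this lemma at all: it is imported as a black box from B\'ar\'any--Dalla \cite{BaranyDalla} and Vu \cite{VuConcentration} (see also Lemma~2.2 of \cite{ReitznerSurvey}). What you have written is essentially a faithful reconstruction of the original B\'ar\'any--Dalla argument: the separation step showing that $K_{(t)}\not\subseteq K_N$ forces an empty $t$-cap, the discretisation by an economic cap covering, the union bound, and the choice $t=c\log N/N$ all go through exactly as you describe, and the sharp exponent $(n-1)/(n+1)$ for $m$ is not even needed, since any polynomial bound on $m$ is absorbed by the exponential factor. Two small points a full write-up would have to address. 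First, the assertion that \emph{every} $t$-cap contains one of the sets $M_i$ of volume $\geq c_1 t$ is not the literal statement of the economic cap covering theorem (which produces caps $C_i$ covering the wet part together with disjoint sets $C_i'\subseteq C_i$ of volume $\asymp t$); passing from ``the wet part is covered'' to ``every $t$-cap swallows some $M_i$'' uses the Macbeath-region comparison lemma underlying the construction in \cite{BaranyLarman}. You correctly flag this as the one non-routine ingredient and cite it appropriately, so this is a matter of bookkeeping rather than a gap. Second, your constant $c$ picks up a dependence on $V_n(K)$; this is consistent with the lemma's claim $c=c(\beta,n)$ only after normalising $V_n(K)=1$, which is harmless here since the paper makes exactly this normalisation at the start of the proof of Theorem~\ref{thm:CLT}.
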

\subsection*{A general bound for normal approximation.}
Let $S$ be a Polish space and $f:\bigcup_{k=1}^{N} S^k\to\RR$ be a measurable and symmetric function acting on point configurations of at most $N\in\NN$ points in $S$. For $x=(x_1,\ldots,x_N)\in S^N$, we write $x^i$ for the vector $x$ with the $i$th coordinate removed and $x^{i_1i_2}$ for the vector that arises from $x$ by removing coordinates $i_1$ and $i_2$. Next, we define the first- and second-order difference operator applied to $f(x)=f(x_1,\ldots,x_N)$ by
\begin{align*}
D_if(x) := f(x) - f(x^i)\quad\text{and}\quad D_{i_1,i_2}f(x) := f(x) - f(x^{i_1}) - f(x^{i_2}) + f(x^{i_1i_2}),
\end{align*}
respectively.

%If $X=(X_1,\ldots,X_N)$ is a random vector of elements of $S$ and $A\subseteq\{1,\ldots,N\}$, we write $X^A=(X_1^A,\ldots,X_N^A)$, where $X_i^A=X_i$ if $i\notin A$ and $X_i^A=X_i'$ with $X'=(X_1',\ldots,X_N')$ being an independent copy of $X$. 
We denote by $X=(X_1,\ldots,X_N)$ a random vector of elements of $S$. Let $X',\widetilde{X}$ be independent copies of $X$ and say that a random vector $Z=(Z_1,\ldots,Z_N)$ is a recombination of $\{X,X',\widetilde{X}\}$ provided that $Z_i\in\{X_i,X_i',\widetilde{X}_i\}$ for all $i\in\{1,\ldots,N\}$.

To rephrase the normal approximation bound from \cite{Chatterjee} in the form that can be deduced from \cite{LachiezeReyPeccati} we define
\begin{align*}
\gamma_1 & :=\sup_{(Y,Y',Z,Z')}\EE\bigl[\mathbf{1}\{D_{1,2}f(Y)\neq 0\}\,\mathbf{1}\{D_{1,3}f(Y')\neq 0\}\,(D_2f(Z))^2\,(D_3f(Z'))^2\bigr]\,,\\
\gamma_2 & := \sup_{(Y,Z,Z')}\EE\bigl[\mathbf{1}\{D_{1,2}f(Y)\neq 0\}\,(D_1f(Z))^2\,(D_2f(Z'))^2\bigr]\,,\\
\gamma_3 & := \EE|D_{1}f(X)\big|^4\,,\\
\gamma_4 &:= \EE|D_1f(X)|^3\,,
%\gamma_5 &:= \sup_{A\subseteq\{1,\ldots,N\}}\EE|f(X)D_1f(X^A)^3|\,,
\end{align*}
where the suprema in the definitions of $\gamma_1$ and $\gamma_2$ run over all quadruples or triples of vectors $(Y,Y',Z,Z')$ or $(Y,Z,Z')$ that are recombinations of $\{X,X',\widetilde{X}\}$, respectively. Next, we define $W:=f(X_1,\ldots,X_N)$ and assume that $\EE W=0$ and $0<\EE W^2<\infty$. Recall that $\mathrm{d}(\,\cdot\,,\,\cdot\,)$ denotes the Wasserstein distance defined in \eqref{eq:Wasserstein}.

\begin{lemma}\label{lem:SecondOrderPoincare}
	Under the assumptions stated above, if $G$ denotes a standard Gaussian random variable, then
	\begin{equation*}
	\mathrm{d}\biggl(\frac{W}{\sqrt{\Var W}},G\biggr)\ll \frac {\sqrt{N}}{\Var\,W} \,\Bigl(\sqrt{N^2\gamma_1}+\sqrt{N\gamma_2}+\sqrt{\g_3}\,\Bigr)+\frac{N}{(\Var\,W)^{\frac{3}{2}}}\,\gamma_4\,.
	\end{equation*}
\end{lemma}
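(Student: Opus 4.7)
The plan is to derive the bound via Stein's method combined with an $L^2$-type second-order Poincar\'e inequality for symmetric functionals of independent inputs, in the style of Chatterjee and of Lachi\`eze-Rey and Peccati. Fix a $1$-Lipschitz $h$ and pass to the solution $f_h$ of the Stein equation $f_h'(w)-w f_h(w) = h(w)-\EE h(G)$, which satisfies the standard estimates $\|f_h'\|_\infty\ll 1$ and $\|f_h''\|_\infty\ll 1$. It then suffices to bound the Stein discrepancy $|\EE[f_h'(W/\sqrt{\Var W})-(W/\sqrt{\Var W})f_h(W/\sqrt{\Var W})]|$ uniformly in $h$, so after clearing the normalization everything reduces to an estimate for $\EE[W f_h(W)]$ against $\Var W\cdot\EE[f_h'(W)]$.

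Next I would exploit the independence of $X_1,\ldots,X_N$ via a resampling telescope. With an independent copy $X'$ of $X$, set $W^{(i)}:=f(X_1',\ldots,X_i',X_{i+1},\ldots,X_N)$ and write $W-\EE W = \sum_{i=1}^{N}(W^{(i-1)}-W^{(i)})$, each increment being a martingale difference expressible through $D_i f$ evaluated at two recombinations of $(X,X')$. Plug this decomposition into $\EE[W f_h(W)]$ and Taylor-expand $f_h$ to first order around $W^{(i-1)}$; after cancellation of the main part against $\Var W\cdot\EE[f_h'(W)]$ through the analogous Efron--Stein-type identity $\Var W=\tfrac12\sum_i\EE[(W^{(i-1)}-W^{(i)})^2]$, the remainder is a sum of cross terms of the shape $\EE[D_i f(Y)\,D_j f(Y')]$ with $i\neq j$, together with cubic contributions of the shape $\EE[D_i f(Z)\,(D_i f(Z'))^2]$, the factors being evaluated at suitable recombinations of $\{X,X',\widetilde X\}$.

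The key step is to convert these cross terms into the explicit quantities $\gamma_1,\ldots,\gamma_4$. A term $\EE[D_i f(Y)\,D_j f(Y')]$ with $i\neq j$ vanishes unless the two first-order increments interact through a shared resampled coordinate, which after relabelling is precisely the condition captured by $\mathbf{1}\{D_{1,2}f(Y)\neq 0\}$; applying Cauchy--Schwarz to pairs of such interactions produces the four-factor structure defining $\gamma_1$, while using a single indicator on three-factor products yields $\gamma_2$. The cubic Taylor remainder, bounded by $\|f_h''\|_\infty\ll 1$, produces $\gamma_3$ and $\gamma_4$. Counting the $N$, resp.\ $N^2$, choices of interacting coordinates generates the prefactors $\sqrt{N^2\gamma_1}$, $\sqrt{N\gamma_2}$, $\sqrt{\gamma_3}$, and $N\gamma_4$, and dividing by the appropriate powers of $\sqrt{\Var W}$ that come from normalizing $W$ gives the claimed bound.

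The principal obstacle is the combinatorial bookkeeping in the second step: each cross term emerging from the telescoped Taylor expansion has to be rewritten in the canonical form of some $\gamma_j$ at an explicitly specified recombination of $(X,X',\widetilde X)$, so that the subsequent insertion of indicators $\mathbf{1}\{D_{1,2}f(\cdot)\neq 0\}$ or $\mathbf{1}\{D_{1,3}f(\cdot)\neq 0\}$ is justified without introducing spurious dependencies between the indicator and the squared-difference factors that it multiplies. Once this identification is carried out coherently across all arising cross terms, the rest of the argument is a routine application of Cauchy--Schwarz together with the standard Stein-equation bounds.
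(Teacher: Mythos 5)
The paper does not prove this lemma at all: it is imported verbatim from Chatterjee's work and from Lachi\`eze-Rey and Peccati, so there is no in-paper argument to compare yours against. Judged on its own terms, your sketch has the right architecture (Stein's method plus a discrete second-order Poincar\'e-type bound for functionals of independent inputs), but the step you yourself flag as ``the principal obstacle'' is not a bookkeeping issue --- it is the entire mathematical content of the lemma, and your outline gets two of its ingredients wrong. First, the telescoping identity $W-\EE W=\sum_{i}(W^{(i-1)}-W^{(i)})$ is false as written: $W^{(N)}=f(X')$ is an independent copy of $W$, not $\EE W$. One either needs the martingale (Hoeffding) decomposition with conditional expectations, or --- as Chatterjee actually does --- a Stein coefficient $T$ built by averaging $D_jf(X)\,D_jf(X^A)$ over \emph{all} subsets $A$ of resampled coordinates with carefully chosen combinatorial weights, so that $\EE[Wh(W)]=\EE[h'(W)\,\EE(T\mid W)]$ holds exactly. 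The square roots $\sqrt{N^2\gamma_1}$, $\sqrt{N\gamma_2}$, $\sqrt{\gamma_3}$ in the statement are the signature of bounding $\EE|\EE(T\mid W)-\Var W|\le\sqrt{\Var T}$ for that coefficient; a direct Taylor expansion of $\EE[Wf_h(W)]$ along a prefix telescope does not produce this structure without further work.

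Second, the assertion that a cross term $\EE[D_if(Y)\,D_jf(Y')]$ with $i\neq j$ ``vanishes unless the two increments interact through a shared resampled coordinate, which is precisely the condition captured by $\mathbf{1}\{D_{1,2}f(Y)\neq0\}$'' is not correct as stated and cannot be taken for granted. These covariance-type terms do not literally vanish; the mechanism is a discrete integration-by-parts identity that rewrites the covariance of first-order differences at distinct coordinates through second-order differences $D_{i,j}f$, and it is exactly here that the third independent copy $\widetilde X$ and the notion of recombination enter (your sketch introduces $\widetilde X$ only in passing, with no indication of where it comes from). Supplying this identity, with the correct weights and with the indicators decoupled from the squared-difference factors, is the proof of the lemma; without it the proposal reduces to a restatement of the target bound. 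If you want to write this out, follow Chatterjee's Theorem~2.2 construction or the binomial-process version in Lachi\`eze-Rey and Peccati rather than the prefix-resampling route.
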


\section{Proof of Theorem \ref{thm:CLT}}\label{sec:ProofCLT}

In the proof of our result we will make use of the following lower and upper variance bounds, proven by B\'ar\'any, Fodor and V\'igh \cite{BaranyFodorVigh}, namely,
\begin{equation}\label{eq:LowerVarianceBound}
N^{-\frac{n+3}{n+1}}\ll\Var V_j(K_N)\ll N^{-\frac{n+3}{n+1}}
\end{equation}
for all $j\in\{1,\ldots,n\}$

According to Lemma \ref{prop:VanVu}, we see that for any $\b\in(0,\infty)$ there exists a constant $c=c(\b,n)\in(0,\infty)$ such that the random polytope $[X_2,\dots,X_N]$ contains the floating body $K_{(c\log N/N)}$ with high probability. More precisely, denoting the latter event by $B_1$, it holds that for sufficiently large $N$,
\begin{equation}\label{eq:ProbabilityAc}
\PP(B_1^c)\leq (N-1)^{-\b} \leq c_1 N^{-\b}\,,
\end{equation}
where $c_1\in(0,\infty)$ is a constant independent of $N$. Note that we choose $\b$ large enough ($\b=5$ will be sufficient for all our purposes). 

Next, we let $Y,Y',Z,Z'$ be recombinations of our random vector $X=(X_1,\ldots,X_N)$ and denote by $B_2$ the event that $\bigcap_{W\in\{Y,Y',Z,Z'\}}[W_4,\ldots,W_N]$ contains $K_{(c\log N/N)}$. By the union bound it follows that the probability of $B_2^c$ is also small:
\begin{equation}\label{eq:ProbabilityBc}
\PP(B_2^c)\leq c_2N^{-\b}\,,
\end{equation}
where $c_2\in(0,\infty)$ is again a constant independent of $N$.

\begin{proof}[Proof of Theorem \ref{thm:CLT}]
	Assume without loss of generality that $K$ has volume one. 
	The idea of the proof is to apply the normal approximation bound in Lemma \ref{lem:SecondOrderPoincare} to the random variables
	\begin{equation*}
	W=f(X_1,\ldots,X_N):=V_j([X_1,\ldots,X_N])-\EE V_j(K_N)\,,
	\end{equation*}
	which clearly satisfy the assumptions made in Lemma \ref{lem:SecondOrderPoincare}. To this end, we  need to control, in particular, the first- and second-order difference operators $D_iW=D_iV_j(K_N)$ and $D_{i_1,i_2}W=D_{i_1,i_2}V_j(K_N)$ for $i,i_1,i_2\in\{1,\ldots,N\}$.
	
	Conditioned on the event $B_1$, we use \eqref{eq.Kubota} to estimate the first-order difference operator applied to the intrinsic volume functional $V_j(K_N)$ as follows:
	\begin{equation}\label{eq:Bound1stOrderDifference}
	\begin{split}
	D_1V_j(K_N)&=\binom{n} {j}\frac{\kappa_n}{\kappa_j\kappa_{n-j}}\int_{G(n,j)}\mathrm{vol}_j((K_N|L)\setminus ([X_2,\dots,X_N]|L))\,\nu_j(\dint L)\\
	&\qquad\qquad\qquad\qquad\qquad\qquad\times\mathbf{1}\{X_1\in K\setminus K_{(c\log N/N)}\}.
	\end{split}
	\end{equation}
	For the sake of brevity we will indicate \([X_2,\dots,X_N]\) by \(K_{N-1}\).
	On the event $B_1$ we first notice that ${\mathrm {vol}}_j((K_{N}|L)\setminus (K_{N-1}|L))$ is zero if $X_1\in K_{N-1}$. So, we can restrict to the situation that $X_1\in K\setminus K_{N-1}$, which conditioned on $B_1$ occurs with probability $V_n(K\setminus K_{N-1})\ll V_n(K\setminus K_{(c\log N/N)})\ll (\log N/N)^{\frac{2}{n+1}}$, cf.\ \cite[Theorem 6.3]{BaranyLectureNotes}. 
	
	Suppose now that the convex body \(K\) is the normalized Euclidean unit ball in \(\RR^n\). It is our aim to define a full-dimensional cap $C$ such that $K_N\setminus K_{N-1}$ is contained in $C$. For this reason, we define $z$ to be the closest point to \(X_1\) on \(\partial K\) (we notice that $z$ is uniquely determined if $K_{(c\log N/N)}$ is non-empty). The visible region of $z$ is defined as
	\begin{equation*}
	\Vis_z(N) := \{x\in K\setminus K_{(c\log N/N)}:[x,z]\cap K_{(c\log N/N)}=\emptyset\}\,.
	\end{equation*}
	By definition of the floating body $K_{(c\log N/N)}$, the diameter of $\Vis_z(N)$ is $c_3(\log N/N)^{\frac{1}{n+1}}$, where $c_3\in(0,\infty)$ is a constant not depending on $N$. Let us denote by $D(z,c_3(\log N/N)^{\frac{1}{n+1}})$ the set of all points on the boundary of $K$ which are of distance at most $c_3(\log N/N)^{\frac{1}{n+1}}$ to $z$. Then, it follows from \cite[Lemma 6.2]{VuConcentration} that $C:=\conv\{D(z,c_3(\log N/N)^{\frac{1}{n+1}})\}$ has volume of order at most $\log N/N$. Moreover, $C$ is in fact a spherical cap and the central angle of it is denoted by $\alpha$. For a subspace $L\in G(n,j)$, one has that $(K_N|L)\setminus (K_{N-1}|L)\subseteq C|L$.
	The volume $\mathrm{vol}_j(C|L)$ of the projected cap $C|L$ is $\mathrm{vol}_j(C|L)\ll (\log N/N)^{\frac{j+1}{n+1}}$. Indeed, the height of \(C|L\) keeps the order of the height of \(C\), namely \((\log N/N)^{\frac{2}{n+1}}\), while the order of its base changes from \(((\log N/N)^{\frac{1}{n+1}})^{n-1}\) to  \(((\log N/N)^{\frac{1}{n+1}})^{j-1}\), since $L$ is a subspace of dimension $j$. Note that, by construction of \(C\), if $\sphericalangle(z,L)$, the angle between $z$ and $L$, is too wide compared to \(\alpha \) , then \(C|L\subseteq K_{N-1}|L\), for sufficiently large \(N\). In particular, \((K_{N}\setminus K_{N-1})|L \subseteq K_{N-1} |L\), which implies \(K_{N}|L=K_{N-1}|L\). In fact, it is easily checked that the integrand in \eqref{eq:Bound1stOrderDifference} can only be non-zero if \(\sphericalangle(z,L)\ll\alpha\) (the constant can be taken to be \(2\) in the case of the ball). Therefore, we can restrict the integration in \eqref{eq:Bound1stOrderDifference} to the set $\{L\in G(n,j):\sphericalangle(z,L)\ll\alpha\}$. It is not difficult to verify that $\alpha\ll V_n(C)^\frac{1}{n+1}$, see also Equation (27) in \cite{BaranyFodorVigh}. 
	
	Taken all together, this yields
	\begin{equation*}
	\begin{split}
	D_1V_j(K_N) \ll \biggl({\frac{\log N}{N}}\biggr)^{\frac{j+1}{n+1}}&\,\nu_j\Bigl(\Bigl\{L\in G(n,j):\sphericalangle(z,L)\ll V_n(C)^\frac{1}{n+1}\Bigr\}\Bigr)\\
	&\times\mathbf{1}\{X_1\in K\setminus K_{(c\log N/N)}\}\,.
	\end{split}
	\end{equation*}
	According to Lemma \ref{lem:SubspacesAngle} and the fact that $V_n(C)\ll \log N/N$, it holds that
	\begin{equation*}
	\nu_j\Bigl(\Bigl\{L\in G(n,j):\sphericalangle(z,L)\ll V_n(C)^\frac{1}{n+1}\Bigr\}\Bigr)\ll \biggl({\frac{\log N}{N}}\biggr)^{\frac{n-j}{n+1}}\,,
	\end{equation*}
	which in turn implies
	\begin{equation}\label{eq:D1bound}
	\begin{split}
	D_1V_j(K_N)&\ll \biggl({\frac{\log N}{N}}\biggr)^{\frac{j+1}{n+1}}\, \biggl({\frac{\log N}{N}}\biggr)^{\frac{n-j}{n+1}}\,\mathbf{1}\{X_1\in K\setminus K_{(c\log N/N)}\}\\
	&= {\frac{\log N}{N}}\,\mathbf{1}\{X_1\in K\setminus K_{(c\log N/N)}\}\,.
	\end{split}
	\end{equation}
	
	To extend the argument for the general case, we argue as in \cite[Section~6]{BaranyFodorVigh}. Namely, since $K$ is compact, we can choose $\gamma\in(0,\infty)$ and $\Gamma\in(0,\infty)$ to be, respectively, the global lower and the global upper bound on the principal curvatures of $\partial K$. Remark 5 on page 126 of \cite{Schneider} ensures that under our assumptions on the smoothness of the convex body $K$ all projected images of $K$ also have a boundary with the same features as $\partial K$, and we choose $\gamma$ and $\Gamma$ such that they also bound from below and above the principal curvatures of each $j$-dimensional projection of $K$. Since we can approximate \( \partial K\) locally with affine images of balls, the construction of the cap \(C\) above and the relations regarding its volume, its central angle and the subspaces \(L\) which ensure \(C|L\subseteq K_{N-1}|L\) are not affected. Due to this, the relations
	$$
	\mathrm{vol}_j(C|L)\ll (\log N/N)^{\frac{j+1}{n+1}}\,,\quad \alpha\ll V_n(C)^\frac{1}{n+1}\ll (\log N/N)^{\frac{1}{n+1}}
	$$
	and
	$$
	\sphericalangle(z,L)\ll\alpha
	$$
	from the above argument still hold, but this time the implicit constants depend on $\g$ and $\G$. From here, the bound \eqref{eq:D1bound} can be obtained in the same way as for the ball.

	Moreover, on the complement $B_1^c$ of $B_1$, we use the trivial estimate $D_1V_j(K_N)$ $\leq V_j(K)$ and thus conclude that
	\begin{equation*}
	\begin{split}
	\EE[(D_1V_j(K_N))^p] &= \EE[(D_1V_j(K_N))^p\,\mathbf{1}_{B_1}]+\EE[(D_1V_j(K_N))^p\,\mathbf{1}_{B_1^c}]\\
	&\ll \biggl({\frac{\log N}{N}}\biggr)^{p}\,V_n(K\setminus K_{(c\log N/N)})\ll \biggl({\frac{\log N}{N}}\biggr)^{p+\frac{2}{n+1}}
	\end{split}
	\end{equation*}
	for all $p\in\{1,2,3,4\}$, where we applied the probability estimate \eqref{eq:ProbabilityAc} in the second step, which ensures that the second term can be made very small for large $N$ (the choice for $p$ is motived by our applications below). As a consequence, we can already bound the terms appearing in the normal approximation bound in Lemma \ref{lem:SecondOrderPoincare} that involve $\g_3$ and $\g_4$. Namely, using the lower variance bounds \eqref{eq:LowerVarianceBound} we see that
	\begin{align*}
	\frac{\sqrt{N}}{\Var V_j(K_N)}\,\sqrt{\g_3} &\ll\frac{\sqrt{N}}{ N^{-\frac{n+3}{n+1}}}\,\biggl({\frac{\log N}{N}}\biggr)^{2+\frac{1}{n+1}} = N^{-\frac{1}{2}+\frac{1}{n+1}}\,(\log N)^{2+\frac{1}{n+1}}\,,\\
	\frac{N}{(\Var V_j(K_N))^\frac{3}{2}}\,\g_4 & \ll\frac{N}{N^{-\frac{3}{ 2}\frac{n+3}{n+1}}}\,\biggl({\frac{\log N}{N}}\biggr)^{3+\frac{2}{n+1}} = N^{-\frac{1}{2}+\frac{1}{n+1}}\,(\log N)^{3+\frac{2}{n+1}}\,.
	\end{align*}
	Next, we consider the second-order difference operator. 
	For \(z\in K\setminus K_{(c\log N/N)}\), recall that
	\begin{equation*}
	\Vis_z(N) := \{x\in K\setminus K_{(c\log N/N)}:[x,z]\cap K_{(c\log N/N)}=\emptyset\}\,.
	\end{equation*}
	On the event $B_2$ it may be concluded from \eqref{eq:D1bound} that $D_if(V)^2\ll (\log N/N)^{2}$ for all $i\in\{1,2,3\}$ and $V\in\{Z,Z'\}$. 
	We note that on $B_2$ the following inclusion holds:
	\begin{align*}
	\{D_{1,2}f(Y)\neq 0\}&\subseteq\{Y_1\in K\setminus K_{(c\log N/N)}\}\cap \{Y_2\in K\setminus K_{(c\log N/N)}\}\\
	&\quad\quad\cap \{\mathrm{Vis}_{Y_1}(N)\cap\mathrm{Vis}_{Y_2}(N)\neq\emptyset\}\\
	&\subseteq \{Y_1\in K\setminus K_{(c\log N/N)}\}\cap\biggl\{Y_2\in\bigcup_{x\in\mathrm{Vis}_{Y_1}(N)} \mathrm{Vis}_x(N)\biggr\}\,.
	\end{align*}
	The same applies to $D_{1,3}f(Y')$ as well. We thus infer that
	\begin{align*}
	&\EE\bigl[\mathbf{1}\{D_{1,2}f(Y)\neq 0\}\,\mathbf{1}_{B_2}\bigr]\\
	&\le\PP\bigl( Y_1\in K\setminus K_{(c\log N/N)}\bigr)\,\PP \biggl( Y_2\in\bigcup_{x\in\mathrm{Vis}_{Y_1}(N)} \mathrm{Vis}_x(N) \biggm| Y_1\in K\setminus K_{(c\log N/N)} \biggr)\\
	&\le \PP\bigl(Y_1\in K\setminus K_{(c\log N/N)}\bigr)\,\sup_{z\in K\setminus K_{(c\log N/N)}}\PP \biggl( Y_2\in\bigcup_{x\in\mathrm{Vis}_{z}(N)} \mathrm{Vis}_x(N)\biggr)\\
	&=V_n \Bigl(K\setminus K_{(c\log N/N)}\bigr)\,\sup_{z\in K\setminus K_{(c\log N/N)}} V_n\Bigl(\bigcup_{x\in\mathrm{Vis}_{z}(N)} \mathrm{Vis}_x(N) \Bigr)\,.
	\end{align*}
	Since the diameter of the previous union is of order $(\log N/N)^{\frac{1}{n+1}}$, it follows from \cite[Lemma 6.2]{VuConcentration} that
	\begin{equation*}
	\Delta(N):=\sup_{z\in K\setminus K_{(c\log N/N)}}V_n\Bigl(\bigcup_{x\in\mathrm{Vis}_z(N)} \mathrm{Vis}_x(N)\Bigr)\ll {\frac{\log N}{N}}\,.
	\end{equation*}		
	Moreover, on the complement $B_2^c$ of $B_2$ we estimate all the indicator functions by one and the value of all difference operators by the constant $V_j(K)$. Since $\PP(B_2^c)$ is small in $N$ (recall \eqref{eq:ProbabilityBc}), this readily implies
	\begin{equation*}
	\g_2 \ll \biggl({\frac{\log N}{N}}\biggr)^{4}\,V_n(K\setminus K_{(c\log N/N)})\,\Delta(N)\ll \biggl({\frac{\log N}{N}}\biggr)^{5+\frac{2}{n+1}}\,.
	\end{equation*}
	Analogously, we can bound \(\gamma_1\). First, suppose that \(Y_1=Y_1'\). Then, conditioned on $B_2$,
	\begin{align*}
	&\{D_{1,2}f(Y)\neq 0\}\cap\{D_{1,3}f(Y')\neq 0\}\\
	&\subseteq\{\{Y_1,Y_2,Y_3'\}\subseteq K\setminus K_{(c\log N/N)}\}\cap \{\mathrm{Vis}_{Y_2}(N)\cap\mathrm{Vis}_{Y_1}(N)\neq\emptyset\}\\
	&\quad\quad\cap \{\mathrm{Vis}_{Y_3'}(N)\cap\mathrm{Vis}_{Y_1}(N)\neq\emptyset\}\\
	&\subseteq \{Y_1\in K\setminus K_{(c\log N/N)}\}\cap\biggl\{\{Y_2,Y_3'\}\subseteq\bigcup_{x\in\mathrm{Vis}_{Y_1}(N)} \mathrm{Vis}_x(N)\biggr\}\,,
	\end{align*}
	and arguing as before leads to
	\begin{align*}
	&\EE\bigl[\mathbf{1}\{D_{1,2}f(Y)\neq 0\} \,\mathbf{1}\{D_{1,3}f(Y')\neq 0\}\,\mathbf{1}_{B_2}\bigr]\\
	&\leq 
	\PP\bigl( Y_1\in K\setminus K_{(c\log N/N)}\bigr)\sup_{z\in K\setminus K_{(c\log N/N)}}\PP \biggl( \{Y_2,Y_3'\}\subseteq\!\!\!\bigcup_{x\in\mathrm{Vis}_{z}(N)} \!\!\!\mathrm{Vis}_x(N)\biggr)\\
	&\le V_n(K\setminus K_{(c\log N/N)})\,\Delta(N)^2\,.
	\end{align*}
	Note that the case \(Y_1\neq Y_1'\) gives a smaller order since, by independence, it leads to an extra factor \(V_n(K\setminus K_{(c\log N/N)})\). Thus, by conditioning on $B_2$ and its complement, we obtain
	\begin{equation*}
	\g_1 \ll \biggl({\frac{\log N}{N}}\biggr)^{4} \,V_n(K\setminus K_{(c\log N/N)})\,\Delta(N)^2 \ll \biggl({\frac{\log N}{N}}\biggr)^{6+\frac{2}{n+1}}\,.
	\end{equation*}
	Now, the other terms appearing in the normal approximation bound in Lemma \ref{lem:SecondOrderPoincare} can be estimated using the lower variance bounds \eqref{eq:LowerVarianceBound} as follows,
	\begin{align*}
	\frac{\sqrt{N}}{\Var V_j(K_N)}\,\sqrt{N^2\g_1} &\ll\frac{\sqrt{N}}{ N^{-\frac{n+3}{n+1}}}\,\sqrt{N^2\cdot \biggl({\frac{\log N}{N}}\biggr)^{6+\frac{2}{n+1}}} = N^{-\frac{1}{2}+\frac{1}{n+1}}\,(\log N)^{3+\frac{1}{n+1}}\,,\\
	\frac{\sqrt{N}}{\Var V_j(K_N)}\,\sqrt{N\g_2} &\ll \frac{\sqrt{N}}{ N^{-\frac{n+3}{n+1}}}\,\sqrt{N\cdot \biggl({\frac{\log N}{N}}\biggr)^{5+\frac{2}{n+1}}} = N^{-\frac{1}{2}+\frac{1}{n+1}}\,(\log N)^{\frac{5}{2}+\frac{1}{n+1}}\,.
	\end{align*}
	%Finally, using \eqref{eq:GAMMA5} and the Cauchy-Schwarz inequality we find, again by conditioning on $B_2$ and $B_2^c$, that
	%\CT[Is this really correct?? We still have the set $A$ in the definition of $\g_5$ ...]
	%\begin{align*}
	%\g_5 &\ll \EE|W|\,V_n(K\setminus cK_{(1/N)})\,\d_j(N)^3+\EE|D_iV_j(K_N)|^4\\
	%&\leq \sqrt{\EE W^2}\,V_n(K\setminus cK_{(1/N)})\,\d_j(N)^3+\EE|D_iV_j(K_N)|^4\,.
	%\end{align*}
	%Thus, applying now Proposition \ref{prop:VjDelta}, \eqref{eq:Bound1stOrderDifference} and \eqref{eq:LowerVarianceBound} we conclude that
	%\begin{align*}
	%{N\over(\Var V_j(K_N))^2}\g_5 &\ll {N\over N^{-{3\over 2}\frac{n+3}{n+1}}}N^{-3-\frac{2}{n+1}}+{N\over N^{-2\frac{n+3}{n+1}}}N^{-4-\frac{2}{n+1}}\\
	%& = N^{-\frac{1}{2}+\frac{1}{n+1}}+N^{-1+\frac{2}{n+1}} \ll N^{-\frac{1}{2}+\frac{1}{n+1}}\,,
	%\end{align*}
	%since the first term is dominating for all $n\geq 1$.
	Putting together all estimates, we arrive at
	\begin{equation}\label{eq:RateOfConvergence}
	\begin{split}
	\mathrm{d}\biggl(\frac{V_j(K_N)-\EE V_j(K_N)}{\sqrt{\Var V_j(K_N)}},G\biggr) &\ll N^{-\frac{1}{2}+\frac{1}{n+1}}\Bigl((\log N)^{3+\frac{1}{n+1}}+(\log N)^{\frac{5}{ 2}+\frac{1}{n+1}}\\
	&\qquad\qquad\qquad+(\log N)^{2+\frac{1}{n+1}}+(\log N)^{3+\frac{2}{n+1}}\Bigr)\\
	&\ll N^{-\frac{1}{2}+\frac{1}{n+1}}\,(\log N)^{3+\frac{2}{n+1}}
	\end{split}
	\end{equation}
	in view of the normal approximation bound in Lemma \ref{lem:SecondOrderPoincare}. In particular, as $N\to\infty$, this converges to zero and so the random variables
	\begin{equation*}
	W_j(K_N)=\frac{V_j(K_N)-\EE V_j(K_N)}{\sqrt{\Var V_j(K_N)}}
	\end{equation*}
	converge in distribution to the standard Gaussian random variable $G$. The proof of Theorem \ref{thm:CLT} is thus complete.
\end{proof}	
\subsection*{Acknowledgement}
We would like to thank Matthias Schulte for pointing us to a mistake in the first version of this manuscript, as well as the two referees for their suggestions.

\bibliographystyle{amsplain}

\end{document}